\newtheorem{theorem}{Theorem}[section]
\newtheorem{proposition}[theorem]{Proposition}
\newtheorem{lemma}[theorem]{Lemma}
\newtheorem{conjecture}[theorem]{Conjecture}
\DeclareMathOperator{\Vol}{Vol}
\DeclareMathOperator{\interior}{int}
\DeclareMathOperator{\Hess}{Hess}
\DeclareMathOperator{\length}{L}
\DeclareMathOperator{\sym}{Sym}
\DeclareMathOperator{\per}{Per}
\DeclareMathOperator{\dl}{dL}
\DeclareMathOperator{\dv}{dVol}
\newcommand{\R}{\mathbb{R}}
\newcommand{\Z}{\mathbb{Z}}
\newcommand{\M}{\textbf{M}}
\newcommand{\eps}{\varepsilon}
\theoremstyle{definition}
\newtheorem{definition}[theorem]{Definition}
\title{Generic density of geodesic nets}
\author{Yevgeny Liokumovich and Bruno Staffa}
\begin{document}

\maketitle

\begin{abstract}
We prove that for a Baire-generic Riemannian metric
on a closed smooth manifold, the union of the images of all
stationary geodesic nets
forms a dense set.
\end{abstract}

\section{Introduction}
A weighted multraph is a finite one-dimensional simplicial complex $\Gamma$ with a multiplicity $n(E)\in\mathbb{N}$ assigned to each edge
(1-dimensional face) $E$ of $\Gamma$. A geodesic net is a map from a weighted multigraph $\Gamma$ 
to a Riemannian manifold $(M,g)$, whose edges are
geodesic segments in $M$. 
A geodesic net is called stationary
if it is a critical point of the length functional $\length_{g}$ with respect to $g$.
This is equivalent to the condition that the sum of the inward pointing
unit tangent vectors (with multiplicity) is zero at every vertex
(see \cite{NP} for background on stationary geodesic nets and 
open problems).

In this paper we prove the following result.

\begin{theorem} \label{density}
Let $M^n$, $n \geq 2$,
be a closed manifold
and let $\mathcal{M}^k$ be the space of $C^k$
Riemannian metrics on $M$, $3\leq k \leq \infty$. 
For a generic (in the Baire sense) subset of $\mathcal{M}^k$ the union of the images of all 
embedded stationary geodesic nets in $(M,g)$ is dense.
\end{theorem}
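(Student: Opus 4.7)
The plan is to follow the Irie--Marques--Neves scheme for generic density theorems, with the length functional $\length_g$ on stationary geodesic nets in place of the area functional on hypersurfaces. Fix a countable basis $\{U_j\}_{j \in \N}$ of open sets of $M$ and set
\[
\mathcal{G}_j := \{g \in \mathcal{M}^k : \text{some non-degenerate embedded stationary geodesic net in } (M,g) \text{ meets } U_j\}.
\]
By the Baire category theorem it suffices to show that each $\mathcal{G}_j$ is open and dense, since $\bigcap_j \mathcal{G}_j$ is then a dense $G_\delta$ contained in the set of metrics asserted by the theorem.

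Openness of $\mathcal{G}_j$ will follow from the implicit function theorem. For a non-degenerate embedded stationary net $\G$ in $(M,g)$, the stationarity conditions (geodesic ODE on each edge plus vanishing weighted sum of inward unit tangents at each vertex) form a smooth equation in the pair (net, metric) whose linearization in the net direction is the length Hessian of $\G$, non-degenerate by assumption. Hence $\G$ deforms smoothly to $\G(g')$ for all $g'$ in a $C^k$-neighborhood of $g$, and the conditions of being embedded and of meeting $U_j$ are open.

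The substantive step is density. Given $g \in \mathcal{M}^k$ and $U = U_j$, suppose for contradiction that a $C^k$-neighborhood $\mathcal{V}$ of $g$ satisfies $\mathcal{V} \cap \mathcal{G}_j = \emptyset$. Choose a non-negative bump $h$ with $\mathrm{supp}(h) \subset U$ and consider $g_t = g + t h$, $t \in [0,\eps]$, all lying in $\mathcal{V}$. The total volume $\Vol(M,g_t)$ is strictly increasing in $t$, so by the Weyl law for the $p$-widths of the length functional (established in the second author's previous work), $t \mapsto \omega_p(g_t)$ must be strictly increasing for $p$ sufficiently large. On the other hand, a standard min-max pull-tight argument implies that at any $t$ where $\omega_p$ strictly increases there is a stationary geodesic net in $(M,g_t)$ whose image meets $\mathrm{supp}(h) \subset U$ (the widths could not increase without a critical configuration charging the region where the metric is being deformed). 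An arbitrarily small further perturbation upgrades this net to an embedded, non-degenerate stationary geodesic net still meeting $U$, producing a metric in $\mathcal{V} \cap \mathcal{G}_j$ and a contradiction.

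I expect the principal obstacle to be the final upgrade: showing that the critical object produced by min-max, a priori only a stationary $1$-varifold with some combinatorial structure at junctions, can be made an embedded non-degenerate geodesic net by a small metric perturbation. This is a bumpy-metrics theorem for the length functional and requires a transversality analysis of the length Hessian that must handle edges of variable multiplicities meeting at interior vertices of varying valency. The corresponding step for minimal hypersurfaces, due to White, is already delicate; the one-dimensional analogue with weighted multigraphs brings its own combinatorial subtleties that must be addressed before the implicit function argument used for openness can be applied.
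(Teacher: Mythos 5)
Your outline tracks the Irie--Marques--Neves template correctly at the level of the open-and-dense decomposition over a countable basis, and the openness step via the implicit function theorem and non-degeneracy matches Lemma~\ref{nondegenerate}. But the density step has a genuine gap: you invoke ``the Weyl law for the $p$-widths of the length functional (established in the second author's previous work)'', and no such result is available. The Weyl law of Liokumovich--Marques--Neves covers codimension-one cycles in general compact manifolds and $k$-cycles only in Euclidean domains; for $1$-cycles in an $n$-manifold with $n\geq 4$ it remains open (the $n=3$ case was settled only recently in \cite{GuL}, and Staffa's prior work \cite{St} is the bumpy metrics theorem, not a Weyl law). The entire point of this paper is to \emph{circumvent} the Weyl law, following Song's observation \cite{So2}: Lemma~\ref{widthincrease} shows directly that a localized increase of the metric forces some $p$-width to strictly increase, using only the polynomial \emph{upper} bound $\omega_p^k \leq Cp^{(n-k)/n}$ of Proposition~\ref{upper bound} and a sweepout-restriction argument --- a far weaker input than the asymptotic equality.

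There is a second, related imprecision. The step ``at any $t$ where $\omega_p$ strictly increases there is a stationary geodesic net charging $\mathrm{supp}(h)$'' is not a standard pull-tight fact and is not how the paper reasons. The argument in Section~4 instead picks a width value $l\in[a,b]$ avoiding the countable set $L_1$ of total lengths realizable by stationary geodesic nets of the unperturbed metric $g_0$; any net achieving width $l$ for a nearby metric then cannot lie entirely outside $U$ (otherwise it would be stationary for $g_0$, contradicting $l\notin L_1$). Making this work requires the auxiliary transversality machinery: Smale's transversality along a $1$-parameter family $g_i$, the finite-dimensional Sard lemma giving a full-measure set $C_i$ of bumpy parameter values, and Lipschitz continuity of widths (Lemma~\ref{lipschitz}) to guarantee $\bigcap_i \omega_p^1(g_i(C_i)) \cap [a,b]\setminus L_1 \neq \emptyset$. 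Finally, the ``principal obstacle'' you flag at the end --- a bumpy-metrics theorem for geodesic nets --- is real but already resolved in \cite{St} and \cite{CM} and is used here as a black box via Theorem~\ref{structure}; the obstacle your proposal actually runs into, and does not address, is the unavailability of the Weyl law.
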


An analogous density result for closed geodesics on surfaces was
proved by Irie \cite{I}. For minimal hypersurfaces in Riemannian
manifolds of dimension $3 \leq n \leq 7$ a generic density result was
proved by Irie-Marques-Neves \cite{IMN}. 

\vspace{0.1in}

\textbf{Acknowledgements.} We are grateful to Otis Chodosh, Alexander Kupers and
Christos Mantoulidis for their valuable comments and suggestions.
We would like to thank the anonymous referee for their valuable feedback.
The authors were partially supported by NSERC Discovery grant. Y.L. was partially supported by Sloan Fellowship.

\section{$\Gamma$-nets}
Fix a weighted multigraph $\Gamma$ and a closed manifold $M$.

\begin{definition}
A $\Gamma$-net on $M$ is a continuous map $f: \Gamma \rightarrow M$ which is a $C^{2}$ immersion when restricted to each edge.
\end{definition}

\begin{definition}
Given a Riemannian metric $g$ on $M$, we say that a $\Gamma$-net $f$ is stationary with respect to $g$ if it is a critical point of the length functional $\length_{g}$. The previous holds if for every one parameter family $\Tilde{f}:(-\delta,\delta)\times\Gamma\to M$ of $\Gamma$-nets with $\Tilde{f}(0,\cdot)=f$ we have
\begin{equation*}
    \frac{d}{ds}\bigg|_{s=0}\length_{g}(\Tilde{f}_{s})=0
\end{equation*}
where $\Tilde{f}_{s}=\Tilde{f}(s,\cdot)$. A more detailed discussion can be found in \cite[Section~1]{St}.
\end{definition}

\begin{definition}
 We say that a $\Gamma$-net is embedded if $f:\Gamma\to M$ is injective (notice that by compactness of $\Gamma$, this implies that $f$ is a homeomorphism onto its image). We denote by $\Omega(\Gamma,M)$ the space of embedded $\Gamma$-nets on $M$.
\end{definition}

\begin{definition}
A weighted multigraph is good* if it is connected and each vertex $v \in \mathscr{V}$ has at least three different incoming edges. A weighted multigraph is good if either it is good* or it is a simple loop with multiplicity.
\end{definition}

Given a stationary geodesic net $f$, we can always find an embedded stationary geodesic net $\Tilde{f}$ with the same image and multiplicity as $f$ at every point.

\begin{lemma} \label{regular}
Let $f: \Gamma \rightarrow (M,g)$ be a stationary geodesic net. Then there exist an embedded stationary geodesic net $\Tilde{f}:\Tilde{\Gamma}\to (M,g)$ which has the same image with multiplicity as $f$ and the property that each connected component of $\Tilde{\Gamma}$ is good. In particular, it holds $\length_{g}(f) = \length_{g}(\Tilde{f})$.
\end{lemma}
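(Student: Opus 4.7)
The idea is to build $\tilde{\Gamma}$ directly from the image $X:=f(\Gamma)\subset M$ and then simplify it.

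First, let $S\subset X$ be the finite set consisting of the vertex images $f(v)$ together with all points of $M$ over which $f$ fails to be locally a single smooth arc (i.e.\ points whose preimage meets the interior of more than one edge of $\Gamma$, or corresponds to a self-intersection of a single edge). Each connected component of $X\setminus S$ is then a smooth geodesic arc in $M$. Form the multigraph $\tilde{\Gamma}_{0}$ whose vertices are the points of $S$ and whose edges are the closures of these components, and let $\tilde f_{0}\colon\tilde\Gamma_{0}\to M$ be the tautological inclusion; by construction $\tilde f_{0}$ is embedded and each edge is a geodesic segment. Define the multiplicity of an edge $\tilde E$ by picking any interior point $p\in\tilde E$ and setting
\[
\tilde n(\tilde E)\;:=\sum_{q\in f^{-1}(p)}n(E_{q}),
\]
where $E_{q}$ is the unique edge of $\Gamma$ whose interior contains $q$; local constancy of the cardinality of $f^{-1}(\cdot)$ away from $S$ makes this independent of $p$.

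Next, I would verify that $\tilde f_{0}$ is stationary. At every $\tilde v\in S$, the sum of inward unit tangents weighted by $\tilde n$ decomposes over the preimages of $\tilde v$ under $f$. For each preimage $q$ that is a vertex of $\Gamma$, its local contribution is precisely the stationarity sum of $f$ at $q$, which vanishes by hypothesis; for each preimage $q$ lying in the interior of an edge $E$, its local contribution is $n(E)(u+(-u))=0$, since $f|_{E}$ is a smooth geodesic through $q$ and contributes two opposite unit tangents at $\tilde v$. Adding these contributions shows that the full stationarity sum vanishes, so $\tilde f_{0}$ is a stationary geodesic net with the same image and the same pointwise multiplicity (hence the same length) as $f$.

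Finally, to enforce the good structure, I would iteratively delete each vertex of valence at most two. A valence-one vertex is impossible: stationarity there would read $n\,v=0$ with $n\in\mathbb{N}$ and $|v|=1$. At a valence-two vertex with incident edges of multiplicities $n_{1},n_{2}$ and inward unit tangents $v_{1},v_{2}$, stationarity forces $n_{1}=n_{2}$ and $v_{1}=-v_{2}$, so the two edges concatenate into a single smooth geodesic edge of the common multiplicity and the vertex can be suppressed. The process terminates on the finite graph $\tilde\Gamma_{0}$; a component all of whose surviving vertices have valence $\ge 3$ is good*, while a component that reduces to a single vertex carrying a single loop edge is a simple loop with multiplicity. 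This yields the required $\tilde f$ with $\length_{g}(\tilde f)=\length_{g}(f)$.

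The main obstacle is Step 2: one must check that the vertex stationarity conditions of $f$ really transfer to $\tilde f_{0}$ even though several distinct preimages in $\Gamma$ may collapse to the same point of $S$. The key observation is that the stationarity sum at each $\tilde v$ splits cleanly as a sum over $f^{-1}(\tilde v)$, in which each vertex-preimage contributes zero by stationarity of $f$ and each interior-preimage contributes a pair of opposite unit vectors that cancel.
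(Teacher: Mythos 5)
Your approach is genuinely different from the paper's. The paper performs step-by-step surgery on the source graph $\Gamma$: subdivide edges so each $f|_E$ is simple, resolve non-transverse overlaps between distinct edges by splitting them into non-overlapping pieces with summed multiplicities, insert vertices at transverse crossings, identify vertices with the same image, and finally collapse valence-two vertices. You instead build the target graph directly from the image $X=f(\Gamma)$ by choosing a finite separating set $S$, taking the components of $X\setminus S$ as edges, and defining multiplicities by a preimage count. Your stationarity verification by decomposing the balancing condition over $f^{-1}(\tilde v)$ is correct and cleaner than the paper's case-by-case analysis, since vertex-preimages contribute zero by stationarity of $f$ and interior-preimages contribute a cancelling pair. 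The collapsing of valence-$\le 2$ vertices at the end matches the paper's final step (the paper phrases it as collapsing vertices where all tangents are colinear, but stationarity plus injectivity makes these exactly the valence-$\le 2$ vertices).

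The one genuine issue is your definition of $S$. The parenthetical gloss -- ``points whose preimage meets the interior of more than one edge of $\Gamma$'' -- is strictly larger than ``points over which the image fails to be locally a single smooth arc,'' and it can be \emph{infinite}: if $f(E_1)$ and $f(E_2)$ overlap non-transversally along a subarc (which the paper explicitly allows and handles in its step (2)), every interior point of the overlap has preimages in both $\operatorname{int}(E_1)$ and $\operatorname{int}(E_2)$, yet the image is still locally a single smooth arc there. If $S$ is infinite, the construction of $\tilde\Gamma_0$ as a finite graph with vertex set $S$ collapses. The fix is to use the geometric criterion only: take $S$ to be the vertex images together with the (finitely many) points where $X$ is not locally a $1$-manifold. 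This $S$ is finite, and your claim that the weighted count $\sum_{q\in f^{-1}(p)} n(E_q)$ is locally constant on $X\setminus S$ still holds, because the count can only jump where some preimage enters or exits the interior of an edge of $\Gamma$, i.e.\ at a vertex image of $\Gamma$, and all such points are already in $S$. With that correction the argument goes through.
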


\begin{proof}
First of all, we can find an injective stationary geodesic net $f':\Gamma'\to M$ which has the same image with multiplicity as $f$. This can be done as follows.
\begin{enumerate}
    \item Firstly, we replace the weighted multigraph $\Gamma$ by a new one such that for every edge $E$, the map $f|_{E}$ does not have any self-intersections. This is done by subdividing each edge $E$ in equal parts $E_{1},...,E_{l}$ so that the length of $f(E_{i})$ is not bigger than the injectivity radius of $(M,g)$ for every $1\leq i\leq l$. 
    \item Once the previous is done, suppose we have two different edges $E_{1}$ and $E_{2}$ with multiplicities $n_{1}$ and $n_{2}$ respectively whose interiors overlap non-transversally. 
    Assume $f(E_1) \cap f(E_2)$ is connected and that their symmetric difference is non-empty. The cases when $f(E_1) \cap f(E_2)$ has
    two components or  $f(E_i) \subset f(E_j)$ are treated similarly.
    
    Let $v_{11},v_{12}$ be the vertices of $E_{1}$ and $v_{21},v_{22}$ be the vertices of $E_{2}$. Then we can remove $E_{1}$ and $E_{2}$, and replace them by three new edges: $E_{3}$ which has vertices $v_{11}$ and $v_{21}$, multiplicity $n_{1}$ and represents the part of $E_{1}$ where there is no overlap with $E_{2}$; $E_{4}$ which has vertices $v_{21}$ and $v_{12}$, multiplicity $n_{1}+n_{2}$ and represents the overlap between $E_{1}$ and $E_{2}$; and $E_{5}$ which has vertices $v_{12}$ and $v_{22}$, multiplicity $n_{2}$ and represents the part of $E_{2}$ where there is no overlap with $E_{1}$. Observe that after applying this procedure, the edges of the new graph are still mapped to geodesic segments of length bounded by the injectivity radius of $(M,g)$, and therefore such curves do not have any self intersections. As each time we do this operation the number of pairs of edges whose interiors intersect non-transversally at some point decreases, eventually we will get a new weighted multigraph such that if two edges intersect at an interior point, then the intersection is transverse.
    \item After the previous step, if $f(E_{1})$ intersects $f(E_{2})$, then $E_{1}\neq E_{2}$ and the intersection is transverse. Consider an intersection point $P$ between $f(E_{1})$ and $f(E_{2})$, $E_{1}\neq E_{2}$ edges. Let $v_{11},v_{12}$ and $v_{21},v_{22}$ be the vertices of $E_{1}$ and $E_{2}$ respectively. We can introduce a new vertex $v$ which will be mapped to $P$ and replace $E_{1},E_{2}$ by $E_{3},E_{4},E_{5},E_{6}$ where $E_{3},E_{4}$ are obtained by the subdivision of $E_{1}$ induced by $P$, and $E_{5},E_{6}$ are obtained by the subdivision of $E_{2}$ induced by $P$. After doing this operation with each intersection point $P$ of the images of different edges, we will obtain a geodesic net $f:\Gamma\to M$ such that given any two different edges $E_{1},E_{2}$, $f(E_{1})$ and $f(E_{2})$ do not overlap at any interior point and no edge self-intersects.
    \item At this point, if $f(t_{1})=f(t_{2})$ for some $t_{1}\neq t_{2}$, then both $t_{1}$ and $t_{2}$ must be vertices. Denote $v_{j}=t_{j}$ for $j=1,2$. If we replace $\Gamma$ by the quotient graph obtained by identifying $v_{1}$ and $v_{2}$, and iterate this procedure each time it is possible, we obtain an injective stationary geodesic net $f:\Gamma\to M$.
\end{enumerate}

Now we perform some changes to ensure that each connected component of $\Gamma'$ is good. We do this component by component, so we can assume that we start from an embedded stationary geodesic net $f':\Gamma'\to M$ where $\Gamma'$ is connected. In such situation, consider a vertex $v$, such that all edges adjacent to $v$ have colinear tangent vectors at $v$. We assume that $\Gamma'$ is not a simple loop with multiplicity, as in that case we are done. Since the vertex is balanced, there exist edges $E_1$ with multiplicity $n_1$ (with vertices $v_1$ and $v$) and $E_2$ with multiplicity $n_2$ (with vertices $v$ and $v_2$) with opposite inward tangent vectors at $v$ and $v_{i}\neq v$ for $i=1,2$. As the map $f'$ is injective, it must be $n_{1}=n_{2}$ and $E_{1},E_{2}$ should be the only edges at $v$ (if not, there would be another edge $E_{3}$ concurring at $v$ with the same inward tangent vector as $E_{i}$ for some $i\in\{1,2\}$, and as $E_{3},E_{i}$ are mapped to geodesics, their images would coincide along an interval). Thus if $v_{1}\neq v_{2}$, we can define a new graph $\Gamma'$ by deleting $v$, $E_{1}$ and $E_{2}$, and adding an edge $E$ connecting $v_{1}$ and $v_{2}$ with multiplicity $n_{1}=n_{2}$ and image $f'(E_{1})\cup f'(E_{2})$. This operation keeps $\Gamma'$ connected and $f'$ injective. If $v_{1}=v_{2}$, the previous construction gives us a simple geodesic loop with multiplicity $n_{1}=n_{2}$. Iterating this construction, we eventually obtain a new $\Tilde{f}:\Tilde{\Gamma}\to M$ such that $\Tilde{\Gamma}$ is either a simple loop with multiplicity or it satisfies that each of its vertices $v$ admits two incoming edges $E_{1},E_{2}$ such that $\Tilde{f}(E_{1})$ and $\Tilde{f}(E_{2})$ have different tangent lines at $\Tilde{f}(v)$. In the latter case, the condition that the sum of the unit inward tangent vectors at $v$ should be $0$ forces there to be at least three different incoming edges at $v$ making $\Tilde{\Gamma}$ a good* weighted multigraph. This completes the proof.
\end{proof}

Following \cite{St} we say that a stationary geodesic net $f$ is non-degenerate if every null vector of $\Hess_{f}\length_{g}$ is parallel along $f$. The following result is a consequence of the Implicit Function Theorem and is proved for embedded $\Gamma$-nets when $\Gamma$ is good* in \cite[Lemma~4.6]{St}. The same argument can be adapted to closed geodesics using the Structure Theorem of Brian White proved in \cite{Wh}. A more elementary proof can be obtained considering the finite dimensional models of the spaces of geodesic nets (instead of working with the infinite dimensional $\Omega(\Gamma,M)$ as in \cite{St}). 

\begin{lemma} \label{nondegenerate}
 Let $\Gamma$ be a good weighted multigraph and  $f_{0}: \Gamma \rightarrow M$ be an embedded non-degenerate stationary geodesic net with respect to a $C^{k}$ metric $g_{0}$, $k\geq 3$. Then there exists a neighborhood $W$ of $g_{0}$ in $\mathcal{M}^{k}$ and a differentiable map $\Delta:W\to\Omega(\Gamma,M)$ such that $\Delta(g)$ is a non-degenerate stationary geodesic net with respect to $g$ for every $g\in W$.
\end{lemma}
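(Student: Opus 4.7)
The plan is to apply the Implicit Function Theorem to a map whose zero set parameterizes stationary geodesic nets, treating the two cases of the definition of ``good'' separately. In each case one produces a Banach-space map $F(g,f)$ whose vanishing encodes stationarity of $f$ with respect to $g$, and the content of the lemma is that $D_f F|_{(g_0, f_0)}$ is an isomorphism on a slice transverse to the reparametrization action.

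If $\Gamma$ is good*, the statement is precisely \cite[Lemma~4.6]{St}, which can be cited directly. That argument places a Banach manifold structure on $\Omega(\Gamma, M)$, constructs $F$ as (a representative of) the first variation of $\length_g$ at $f$, and shows that non-degeneracy of $f_0$ combined with the valence $\geq 3$ condition at every vertex makes $D_f F$ an isomorphism, so the IFT yields $\Delta$; non-degeneracy of $\Delta(g)$ for nearby $g$ follows from openness of invertibility.

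If $\Gamma$ is a simple loop with multiplicity, then $f_0$ is a multiply covered closed geodesic $\gamma_0$, and we invoke the Structure Theorem of White \cite{Wh}. Choose a smooth slice $S$ in the Hilbert manifold of $W^{1,2}$-loops transverse to the $S^1$-reparametrization orbit of $\gamma_0$, and apply the IFT to $G(g, \gamma) = d\length_g(\gamma)|_S$. Its fiber derivative at $(g_0, \gamma_0)$ is the Jacobi operator along $\gamma_0$ restricted to $S$; non-degeneracy says the only null vectors of the full Jacobi operator are multiples of $\dot\gamma_0$, which are tangent to the $S^1$-orbit and transverse to $S$, so the restricted operator is invertible.

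Alternatively, a more elementary proof uses a finite-dimensional model. After the subdivision in Lemma~\ref{regular} each edge of $f_0$ has length bounded by the injectivity radius, so the short geodesic between any two sufficiently close endpoints is unique. Near $f_0$, stationary $\Gamma$-nets with geodesic edges are therefore parameterized by vertex positions $p \in M^{|\mathscr{V}|}$, and stationarity at each vertex $v$ becomes a $T_{p_v} M$-valued equation $\Phi_v(g, p) = 0$ given by the weighted sum of inward unit tangents. The IFT then applies between finite-dimensional manifolds, with $D_p \Phi$ at $(g_0, f_0)$ identified with the restriction of $\Hess \length_{g_0}$ to the slice of vertex-position variations. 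The main obstacle in any of these approaches is verifying the equivalence between the non-degeneracy hypothesis (null vectors parallel along $f_0$) and invertibility of this restricted differential: in the good* case one uses that a parallel vector field is constant on each edge, so continuity at a vertex with three pairwise non-parallel incoming tangent directions forces it to vanish; in the simple-loop case the only parallel null vector is $\dot\gamma_0$, eliminated by passing to the slice $S$.
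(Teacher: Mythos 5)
Your proposal matches the paper's treatment of this lemma, which is itself only a sketch: the authors cite \cite[Lemma~4.6]{St} for the good* case, point to White's structure theorem \cite{Wh} for the simple-loop case, and mention the finite-dimensional model as an alternative, which is exactly the three routes you take. One small imprecision in your closing paragraph: a parallel vector field along a geodesic edge is not literally ``constant'' in a curved manifold; the relevant point in the good* case is that a vector field that is both parallel and tangent to each edge is a constant multiple of $\dot\gamma$ on each edge, so at a vertex with three pairwise non-colinear incoming directions the common value must be tangent to all three and hence zero, which then propagates to kill the field on the whole connected graph — but since the paper simply defers to \cite[Lemma~4.6]{St} here, this does not affect the correctness of your approach.
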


Let $\mathcal{S}^k(\Gamma)$ denote the set of pairs $(g, [f])$,
where $g \in \mathcal{M}^k$ and $[f]$ denotes the equivalence class (up to reparametrization) of an embedded stationary
$\Gamma$-net $f$ with respect to $g$, as defined in \cite{St}.  The following structure theorem for the space of embedded stationary geodesic nets, analogous
to White's structure theorem for minimal submanifolds
\cite{Wh}, was proved by 
Staffa in \cite{St}
(a similar structure theorem for
stationary geodesic nets on surfaces was independently obtained
by Chodosh and Mantoulidis in \cite{CM}).

\begin{theorem} \label{structure}
Let $\Gamma$ be a good weighted multigraph. The space $\mathcal{S}^k(\Gamma)$ is a second countable $C^{k-2}$ Banach manifold
and the projection map $\Pi: \mathcal{S}^k(\Gamma) \rightarrow \mathcal{M}^k$
is a $C^{k-2}$ Fredholm map of Fredholm index $0$. 
For a regular value $g \in \mathcal{M}^k$ the set $\Pi^{-1}(g)$
is a countable collection of non-degenerate embedded stationary geodesic nets.
\end{theorem}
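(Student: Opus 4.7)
The plan is to follow the Sard-Smale/White paradigm: realize $\mathcal{S}^k(\Gamma)$ as the zero set of a Fredholm section of a Banach bundle over $\mathcal{M}^k \times \Omega(\Gamma,M)$, and read off the manifold structure, the index, and the regular-value statement from an abstract implicit function argument. The first step is to model $\Omega(\Gamma,M)$ as a $C^{k-2}$ Banach manifold by fixing a reparametrization slice on each edge (e.g.\ proportional to arclength with respect to a fixed background metric, plus a transverse slice to the finite-dimensional diffeomorphism action at interior edge points), so that the data of $[f]$ is captured by a single representative whose edges are $C^2$-immersions joined at the prescribed combinatorics of $\Gamma$.

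Next, define the stationarity operator $\mathcal{F}\colon\mathcal{M}^k\times\Omega(\Gamma,M)\to\mathcal{E}$ whose fiber over $(g,f)$ has two pieces: an ``edge part'' given by the geodesic tension field $\nabla^g_{f'}f'$ on each edge, and a ``vertex part'' given, for each vertex $v$, by the weighted sum $\sum_{E\ni v} n(E)\, \tau_E(v)$ of inward unit tangent vectors. The zero set of $\mathcal{F}$ is exactly $\mathcal{S}^k(\Gamma)$. A direct computation in local coordinates shows $\mathcal{F}$ is $C^{k-2}$. The crucial analytic step is to verify that $\mathcal{F}$ is transverse to the zero section using variations of $g$: perturbations supported in small tubular neighborhoods of the interior of an edge can realize any prescribed $L^2$-forcing along the geodesic equation, and perturbations localized near a vertex can realize any prescribed change in the vertex balance vector by shifting the conformal factor asymmetrically in the directions of the incoming tangent vectors. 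Here the good* hypothesis (at least three non-colinear incoming edges) is exactly what lets one decouple these vertex variations; the remaining case (a simple loop with multiplicity) reduces to White's structure theorem for closed geodesics in \cite{Wh} applied to the underlying curve.

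With transversality in hand, $\mathcal{S}^k(\Gamma)=\mathcal{F}^{-1}(0)$ inherits a $C^{k-2}$ Banach manifold structure, and second countability follows from separability of $\mathcal{M}^k$ and of the model Banach space for $\Omega(\Gamma,M)$. To analyze $\Pi$, compute its linearization at $(g,[f])$ and restrict to the fiber direction: the kernel is the space of infinitesimal $\Gamma$-net deformations whose variation of length vanishes to second order, i.e.\ Jacobi-type vector fields along $f$ satisfying the linearized balancing condition, modulo parallel reparametrizations; these are precisely the null vectors of $\Hess_f\length_g$ modulo the degenerate ``parallel'' directions that are quotiented away in the definition of non-degeneracy. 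Since $\Hess_f\length_g$ is a self-adjoint elliptic operator on a compact one-dimensional domain with self-adjoint vertex conditions, the cokernel is isomorphic to the kernel, giving Fredholm index $0$.

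The regular-value conclusion then falls out automatically: for regular $g$, the differential $d\Pi$ is surjective at every preimage, hence an isomorphism by index $0$, so $\Pi^{-1}(g)$ is discrete in $\mathcal{S}^k(\Gamma)$ and therefore countable by second countability; and surjectivity of $d\Pi$ is equivalent, via self-adjointness, to the statement that $\Hess_f\length_g$ has no non-parallel null vectors, i.e.\ to non-degeneracy of $f$. I expect the main obstacle to be the transversality argument at vertices: one must carefully choose metric perturbations that change the vertex balance vector in a prescribed direction without simultaneously pushing the edge geodesics off-course in an unrecoverable way, and the bookkeeping required to separate these two effects is what forces the good* structural assumption on $\Gamma$.
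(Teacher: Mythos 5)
The paper does not actually prove Theorem~\ref{structure} itself but cites Staffa~\cite{St}, who carries out the analogue of White's structure theorem~\cite{Wh} in the infinite-dimensional space $\Omega(\Gamma,M)$; your proposal follows that same paradigm (a Fredholm stationarity operator with edge/tension and vertex/balance components, transversality achieved by metric perturbations hitting the finite-dimensional cokernel of the second variation, and Fredholm index $0$ from self-adjointness of $\Hess_f \length_g$ with appropriate vertex conditions), so the overall route is essentially the one in the cited reference. One small imprecision: the role of the good* hypothesis is not so much to ``decouple'' vertex variations in the transversality step as to guarantee, together with embeddedness and the balance condition, that no vertex has all its incoming tangent directions colinear --- this both eliminates ``fake'' degree-two vertices that could slide along an edge (so the reparametrization quotient gives an honest Banach manifold rather than something with extra symmetry) and makes the vertex junction conditions well-posed in the Fredholm analysis; also, a purely conformal perturbation near a vertex rescales all unit tangents by the same factor and therefore does not by itself move the balance vector, so the vertex transversality needs anisotropic metric variations rather than the conformal bump you sketch.
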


By Sard-Smale theorem \cite{Sm} a generic metric $g \in \mathcal{M}^k$
is a regular value of $\Pi$.

\section{Min-max constructions}

Stationary geodesic nets arise from Almgren-Pitts
Morse theory on the space of 1-cycles.

By Almgren isomorphism theorem (\cite{A1}, \cite{A2}, \cite{GuL})
the space of mod 2 $k$-cycles on the $n$-sphere $\mathcal{Z}_k(S^n,\Z_2)$
is weakly homotopy equivalent to the Eilenberg-MacLane
space $K(\Z_2, n-k)$. Let $\overline{\lambda}$ denote the
non-trivial element of $H^{n-k}(\mathcal{Z}_k(S^n,\Z_2);\Z_2)$.
Note that all cup powers of $\overline{\lambda}$ are non-trivial and
the cohomology ring of $\mathcal{Z}_k(S^n,\Z_2)$
is generated by the cup powers and Steenrod squares of $\overline{\lambda}$
(\cite{Ha}).

Given a closed $n$-dimensional Riemannian manifold $(M,g)$ consider $\phi: M \rightarrow S^n$ that maps a small open ball $B \subset M$
diffeomorphically onto $S^n \setminus \{p\}$ and sends the rest of $M$ to point $\{p\}$.
For the corresponding map on the space of cycles $\Phi: \mathcal{Z}_k(M,\Z_2)
\rightarrow \mathcal{Z}_k(S^n,\Z_2)$ the pull-back 
$\lambda = \Phi^*(\overline{\lambda}) \neq 0$.

Given a simplicial complex $X$ we say that $F:X \rightarrow \mathcal{Z}_k(M,\Z_2)$
is a $p$-sweepout if $F^*(\lambda^p) \neq 0 \in H^{p(n-k)}(X;\Z_2)$
and $F$ satisfies a no-concentration of mass property (cf. \cite{MN1}, \cite{LMN}).
We define the $k$-dimensional $p$-width $\omega_p^k(M,g)$ by
$$\omega_p^k(M,g) = \inf \{\sup_{x\in X} \M(F(x)): F\text{ is a }p\text{-sweepout of }M\}$$

Using arguments of \cite{Gro1}, \cite[Section 8]{Gro2}, \cite{Gu2} 
we obtain the following upper bounds for the widths $\omega_p^k(M,g)$.

\begin{proposition} \label{upper bound}
Let $(M,g)$ be a closed $n$-dimensional Riemannian manifold. There exists a constant $C=C(g)$,
such that $\omega_p^k(M,g) \leq C p^{\frac{n-k}{n}}$.
\end{proposition}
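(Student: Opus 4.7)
The plan is to follow the Gromov--Guth construction: for each $p$, exhibit an explicit family of $k$-cycles parametrized by a CW complex $X_p$, verify it is a $p$-sweepout, and bound each slice's mass via a cubulation argument whose scaling naturally produces the exponent $(n-k)/n$.

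First I would fix a smooth cubulation $\mathcal{K}$ of $M$ and work with its $m$-fold refinement $\mathcal{K}_m$, which has $\sim m^n$ cells of diameter $\sim 1/m$ measured in $g$. Let $S^m$ denote the $k$-skeleton of $\mathcal{K}_m$. A direct count of $k$-faces yields
\begin{equation*}
\M(S^m)\;\leq\; C_1(g)\cdot m^n\cdot (1/m)^k \;=\; C_1(g)\, m^{n-k},
\end{equation*}
where $C_1(g)$ controls the $k$-volumes of the individual $k$-faces of $\mathcal{K}_1$ and the bi-Lipschitz distortion of the chart maps used to define $\mathcal{K}_m$. Choosing $m$ with $m^n\sim p$ gives the bound $C(g)\,p^{(n-k)/n}$ for a single slice.

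Next, I would build a parametrized family out of $S^m$. The natural options are either to translate $S^m$ along parallel directions within each coordinate chart, yielding a family parametrized by a torus, or to combine $p$ independent sweepout coordinates on disjoint cells to obtain a map from $\RP^p$ into $\mathcal{Z}_k(M;\Z_2)$ in the spirit of Guth's bend-and-cancel construction. In either case, every slice has mass $\leq C(g)\,p^{(n-k)/n}$ by the skeleton bound, and the continuous dependence on the parameter provides the no-concentration-of-mass property required by the definition of a $p$-sweepout.

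The main obstacle I foresee is the cohomological verification $F^*(\lambda^p)\neq 0$ that promotes this family to a bona fide $p$-sweepout. For $k=n-1$ this is the classical computation of cup powers on $\RP^p$ from linear combinations of indicators of disjoint cells, carried out by Guth. For general $k$, the argument follows \cite{Gro1}, \cite{Gro2}, \cite{Gu2}: one uses the skeleta of nested subcomplexes or an induction on codimension, and the nontriviality of $\lambda^p$ is established by an intersection or degree computation against fixed test cycles representing the Poincar\'e duals of the relevant cohomology classes. Since the mass estimate is essentially combinatorial once the cubulation is fixed, the delicate step is precisely this cohomological nontriviality — which is what forces the scaling $p^{(n-k)/n}$ to be optimal up to constants.
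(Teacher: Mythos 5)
Your plan captures the right scaling heuristic, but it has a concrete gap that would derail the argument for $1\le k\le n-2$, and it misses a shortcut that the paper uses to avoid most of the work you identify as "the delicate step."

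The gap is the choice of parameter space. You propose parametrizing the family by a torus or by $\RP^p$. Recall that $\lambda\in H^{n-k}(\mathcal{Z}_k(M;\Z_2);\Z_2)$, so a $p$-sweepout requires $F^*(\lambda^p)\neq 0$ in $H^{p(n-k)}(X;\Z_2)$. For $k<n-1$ the space $\RP^p$ has no cohomology in degree $p(n-k)>p$, so $F^*(\lambda^p)$ is automatically zero and the family cannot be a $p$-sweepout. The same issue rules out a generic torus family unless its dimension is at least $p(n-k)$ and one checks the specific cup-product relation. The parameter space must be chosen so that a degree-$(n-k)$ class has non-vanishing $p$-th cup power; the natural (and the paper's) choice is the symmetric product $\sym_p S^{n-k}$, which has dimension $p(n-k)$ and cohomology ring $H^{j(n-k)}(\sym_p S^{n-k};\Z_2)=\langle\alpha^j\rangle$ for $1\le j\le p$. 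Your $\RP^p$ only works in the hypersurface case $k=n-1$, which the paper handles separately by citing the estimate in \cite{MNRicci}.

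The second point is not a gap but a genuine divergence in strategy. You propose to rebuild the Gromov--Guth sweepout from scratch on $M$ via a refined cubulation and a bend-and-cancel construction, and you correctly identify the hard part: verifying $F^*(\lambda^p)\neq 0$. The paper sidesteps this entirely. It takes Guth's already-constructed $p$-sweepouts $F_p:\sym_p S^{n-k}\to\mathcal{Z}_k(B,\partial B;\Z_2)$ of the Euclidean ball $B$, with the mass bound $C_n p^{(n-k)/n}$ already established in \cite{Gu2}, and transports them to $M$ via a bilipschitz PL map $\Phi:M\to\R^n$ with $\Phi(M)\subset\interior(B)$ by taking preimages $F_p'(x)=\Phi^{-1}(F_p(x))$. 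The mass bound survives because $\Phi$ is simplex-wise bilipschitz, and the $p$-sweepout property is verified by restricting to the canonical $S^{n-k}\subset\sym_p S^{n-k}$ and checking via the Almgren gluing map that this restricted family hits the fundamental class of $M$, forcing $(F_p')^*(\lambda)=\alpha$ and hence $(F_p')^*(\lambda^p)=\alpha^p\neq 0$. This avoids the combinatorial/cohomological bookkeeping that your plan would have to carry out in full.
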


\begin{proof}
The case of $k=n-1$ was proved in \cite[Theorem 5.1]{MNRicci}.
Assume $1 \leq  k \leq n-2$. Let $\sym_p S^{n-k}$ denote the symmetric product of spheres
$\sym_p S^{n-k}= \{(x_1,...,x_p): x_i \in S^{n-k}\}/\per(p)$, where $\per(p)$
is the group of permutations of $p$ elements. For $1 \leq j \leq p$ we have that
$H^{j(n-k)}(\sym_p S^{n-k})= \langle\alpha^j\rangle$, where $\alpha$ is the non-trivial
cohomology class in $H^{n-k}(\sym_p S^{n-k})$ (we are considering cohomology with $\mathbb{Z}_{2}$ coefficients, see \cite{Na}).
In \cite{Gu2} Guth constructed $p$-sweepouts $F_p:\sym_p S^{n-k} \rightarrow 
\mathcal{Z}_{k}(B, \partial B;\Z_2)$ of the Euclidean unit ball
$B \subset \R^n$ by piecewise linear relative $k$-cycles satisfying
$$\sup\{ \M(F_p(x)): x \in \sym_p S^{n-k}\} \leq C_n p^{\frac{n-k}{n}}$$

Fix a fine triangulation and PL structure on $M$ that is bilipschitz equivalent
to the original metric $g$, and let $\Phi: M \rightarrow \R^{n}$
be a PL map, such that each simplex $\Delta$ is bilipschitz to $\Phi(\Delta)$.
After scaling we may assume that $\Phi(M) \subset \interior (B)$. If $z$ is a piecewise linear
relative cycle in $B$, then $\Phi^{-1}(z)$ is a $k$-cycle in $M$.
The map $F_p':\sym_p S^{n-k} \rightarrow \mathcal{Z}_{k}(M;\Z_2)$
defined as
$F_p'(x) = \Phi^{-1} (F_p(x))$ satisfies the desired mass bound.
To see that this is a $p$-sweepout consider the restriction 
of $F_p'$ to $\{[x,0,...,0]: x \in S^{n-k}\}\subset \sym_p S^{n-k}$.
It is straightforward to check that Almgren gluing map (\cite{A1}) maps this family to the fundamental homology class of $M$, so 
$(F_p')^*(\lambda)= \alpha \in H^{(n-k)}(\sym_p S^{n-k})$.

\end{proof}

Almgren showed that widths correspond to volumes of stationary integral
varifolds. For 1-dimensional widths a stronger regularity result
is known (see \cite{A1}, \cite{A2}, \cite{CaCa}, \cite{NR}, \cite{Pi1}, \cite{Pi2}), namely, that the stationary integral 1-varifolds are, in fact, stationary geodesic nets.
Combining this result with Lemma \ref{regular} we obtain the following.

\begin{proposition} \label{width}
The width $\omega_p^1(M,g) = \sum_{i=1}^{P} \length_{g}(\gamma_i)$, where $\gamma_i:\Gamma_{i}\to M$ is an embedded stationary geodesic net and $\Gamma_{i}$ is a good weighted multigraph for each $1\leq i\leq P$.
\end{proposition}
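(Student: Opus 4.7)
The plan is to combine three ingredients: the Almgren--Pitts min--max existence theorem for $p$-widths, the sharp regularity theory for stationary integral $1$-varifolds, and Lemma \ref{regular}. First, I would invoke the Almgren--Pitts min--max theorem in the form developed for $p$-sweepouts (as in Marques--Neves and the references \cite{A1},\cite{A2},\cite{Pi1},\cite{Pi2} cited right before the statement) to produce a stationary integral $1$-varifold $V$ in $(M,g)$ whose total mass equals $\omega_p^1(M,g)$. This is the abstract critical-point step that makes the infimum in the definition of $\omega_p^1$ an actual minimum at the varifold level.

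Next, I would apply the regularity theorem for $1$-dimensional stationary integral varifolds, cited in the paragraph preceding the proposition (Allard--Almgren, Pitts, Nakauchi--Röttgen, and the survey \cite{NP}): any such varifold is, up to integer multiplicities, supported on a finite union of geodesic segments meeting at finitely many junction points, with the balancing condition on inward unit tangents holding at each junction. In other words, $V$ is the integral varifold associated to a stationary geodesic net $f:\Gamma \to M$, and $\length_g(f)= \M(V)= \omega_p^1(M,g)$.

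Then I would apply Lemma \ref{regular} to $f$ to replace it by an embedded stationary geodesic net $\tilde{f}:\tilde{\Gamma}\to M$ with the same image and multiplicity, and such that every connected component of $\tilde{\Gamma}$ is good. By the last assertion of the lemma, $\length_g(\tilde{f})=\length_g(f)=\omega_p^1(M,g)$. Finally, letting $\Gamma_1,\ldots,\Gamma_P$ be the connected components of $\tilde{\Gamma}$ and setting $\gamma_i=\tilde{f}|_{\Gamma_i}$ gives the required decomposition, since $\length_g(\tilde{f})=\sum_{i=1}^{P}\length_g(\gamma_i)$ and each $\Gamma_i$ is good by construction.

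The non-trivial input here is external: the upgrade from stationary integral $1$-varifold to a genuine geodesic net with junctions satisfying the balancing condition, which is what makes the $1$-dimensional case stronger than the higher-dimensional Almgren regularity for stationary integral varifolds. Once this is granted, Lemma \ref{regular} is purely combinatorial/geometric and the decomposition into good components is automatic, so I do not anticipate further obstacles beyond carefully citing the regularity results and verifying that the min--max construction for $p$-sweepouts of $1$-cycles indeed falls within their scope.
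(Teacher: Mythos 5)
Your proposal follows essentially the same route as the paper, which states the proposition without a separate proof environment and relies on the brief discussion immediately preceding it: Almgren's correspondence between widths and stationary integral varifolds, the regularity theory showing stationary integral $1$-varifolds are stationary geodesic nets, and Lemma~\ref{regular} to pass to an embedded net whose connected components are good. One minor attribution slip: the reference \cite{NR} invoked there is Nabutovsky--Rotman, not Nakauchi--R\"{o}ttgen.
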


In \cite{IMN} density of minimal hypersurfaces was proved
using a Weyl law for $(n-1)$-dimensional $p$-widths. The Weyl law was proved
for $(n-1)$-cycles in arbitrary compact manifolds and for $k$-cycles in Euclidean domains  in \cite{LMN}.
However, it is not known in general for $k<n-1$, although
the special case of 1-cycles in 3-manifolds has been resolved recently \cite{GuL}.

In \cite{So2} Song
 observed that the full strength
of the Weyl law is not needed to prove
density of minimal hypersurfaces for generic metrics.
(It does, however, seem that the Weyl law
is necessary to prove a stronger 
equidistribution result in \cite{MNS}). The idea of Song
allows us to circumvent the use of Weyl law to prove
density of stationary geodesic nets.

\begin{lemma} \label{widthincrease}
Let $g_1$ and $g_2$ be two metrics
on $M$ with $g_2 \geq g_1$ and $g_2(x_{0}) > g_1(x_{0})$ for some $x_{0} \in M$. Then there exists $p \geq 1$, such that $\omega^k_p(M,g_2) > \omega^k_p(M,g_1)$.
\end{lemma}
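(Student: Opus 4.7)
I argue by contradiction, assuming $\omega_p^k(M,g_1)=\omega_p^k(M,g_2)$ for every $p\geq 1$. By continuity of the metrics at $x_0$, fix an open ball $U\ni x_0$ and $\delta>0$ with $g_2\geq(1+\delta)g_1$ on $\bar U$; consequently $\M_{g_1}(T)\leq\M_{g_2}(T)$ for every rectifiable $k$-current $T$, with the strict strengthening $\M_{g_2}(T\llcorner\bar U)\geq(1+\delta)^{k/2}\M_{g_1}(T\llcorner\bar U)$.

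The first step is to show that the contradiction assumption forces the min-max varifold at every level $p$ to avoid $U$. Setting $g_t=(1-t)g_1+tg_2$ for $t\in[0,1]$, the map $t\mapsto\omega_p^k(g_t)$ is non-decreasing by pointwise monotonicity of mass, and pinched between the equal endpoint values, hence constant. For each $t$ the Almgren--Pitts construction produces a stationary integral $k$-varifold $V_{p,t}$ with $\M_{g_t}(V_{p,t})=\omega_p^k(g_t)$, and the first variation of mass under the metric deformation yields, for a.e. $t$,
\[
0=\frac{d}{dt}\omega_p^k(g_t)=\frac{1}{2}\int_M\operatorname{tr}_{V_{p,t}}(g_2-g_1)\,dV_{p,t}.
\]
Since $g_2-g_1\geq\delta g_1$ on $\bar U$, the vanishing of this integral forces $\operatorname{supp}(V_{p,t})\cap\bar U=\emptyset$; passing $t\to 0$ by compactness of stationary integral varifolds, for every $p\geq 1$ there is a stationary integral $k$-varifold $V_p$ for $g_1$ with $\M_{g_1}(V_p)=\omega_p^k(g_1)$ and $\operatorname{supp}(V_p)\subset M\setminus U$.

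For the contradiction, I exploit the freedom in the choice of the ball used to define $\lambda=\Phi^*(\bar\lambda)$: pick the ball inside $U$. Then $\phi:M\to S^n$ collapses $M\setminus U$ to the basepoint, so $\Phi$ vanishes on cycles supported in $M\setminus U$, and $\lambda^p$ restricts trivially to this subcomplex. Hence no $p$-sweepout can be arranged with cycles entirely outside $U$. Applying Proposition \ref{upper bound} locally via a bilipschitz identification of $\bar U$ with a Euclidean ball and pulling back Guth's construction, I build a $q$-sweepout $H_q$ of $M$ with image in $\mathcal{Z}_k(\bar U,\Z_2)$ and $\sup\M_{g_1}(H_q)\leq Cq^{(n-k)/n}$. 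Combining $H_q$ with a near-optimal $p$-sweepout $F_p$ of $(M,g_1)$ via the Almgren mod-$2$ sum $\tilde F=F_p+H_q$ produces a $(p+q)$-sweepout in which the $H_q$-component contributes mass inside $\bar U$ that is strictly larger in $g_2$ than in $g_1$ by the factor $(1+\delta)^{k/2}-1$; comparing $\sup\M_{g_i}(\tilde F)$ to the support constraint on the level-$(p+q)$ varifold yields the contradiction.

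The main obstacle is this final comparison step: one must track how the Almgren--Pitts pull-tight procedure interacts with the $H_q$ factor on the product parameter space $X_p\times\sym^q S^{n-k}$, using a Lusternik--Schnirelmann / cup-length analysis to show that the nontriviality of $\lambda^{p+q}$ forces the critical cycles to intersect $\bar U$ quantitatively, contradicting the equality of widths for $q$ large. This follows the strategy of Song \cite{So2} adapted from the hypersurface setting to $k$-dimensional widths.
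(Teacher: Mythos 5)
Your proposal takes a genuinely different route from the paper, and it has a real gap in the final step.

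The paper's proof never touches min-max varifolds or the first variation of the width. It is a purely Lusternik--Schnirelmann argument on sweepouts: fix $\eps>0$ so that any cycle with large $g_2$-mass inside the ball $B_r(x_0)$ has $\M_{g_2}-\M_{g_1}>\eps$; use the sublinear bound $\omega_p^k\leq Cp^{(n-k)/n}$ to find $p$ with $\omega_p^k(g_1)-\omega_{p-1}^k(g_1)<\eps/4$; take a near-optimal $p$-sweepout $F$ for $g_2$, let $X_1$ be the parameters where $F$ has large local mass in the ball, observe that $\lambda$ vanishes on $X\setminus X_1$ so $F|_{X_1}$ is a $(p-1)$-sweepout, and compare $\omega_{p-1}^k(g_1)\leq\sup_{X_1}\M_{g_1}(F)\leq\omega_p^k(g_2)-3\eps/4$. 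If the widths were equal this contradicts the choice of $p$. This is entirely at the level of sweepouts and cup-length, no realization theorem needed.

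Your first half instead invokes the Marques--Neves--Song derivative formula $\frac{d}{dt}\omega_p^k(g_t)=\frac12\int\operatorname{tr}_{V_{p,t}}(g_2-g_1)\,dV_{p,t}$. That formula is not ``the first variation of mass under the metric deformation''; it is the content of a non-trivial lemma in \cite{MNS} that couples a.e.\ differentiability of the Lipschitz function $t\mapsto\omega_p^k(g_t)$ with the min-max realization, via a careful minimizing-sequence and pull-tight argument. If you grant it, the conclusion that some min-max stationary integral $k$-varifold $V_p$ for $g_1$ has $\|V_p\|(U)=0$ at every level $p$ does follow (the passage $t\to0$ is fine by lower semicontinuity of mass on open sets).

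The genuine gap is the final contradiction. Knowing that every level admits a min-max varifold supported in $M\setminus U$ does not, by itself, constrain $\omega_p^k$ or any sweepout: sweepouts (and their critical slices) are not required to be supported where the min-max varifold is, and there is no obstruction to a stationary varifold sitting entirely in $M\setminus U$. Your observation that $\lambda$ vanishes on cycles supported in $M\setminus U$ is about the cohomology class of a family, not about a single varifold; the step ``comparing $\sup\M_{g_i}(\tilde F)$ to the support constraint on the level-$(p+q)$ varifold yields the contradiction'' is never made precise, and I do not see how to make it work. The mass of an arbitrary sweepout bounds the width from above, and the support of a width-realizing varifold gives no lower bound on widths of sweepouts; there is no quantitative handle. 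When you gesture at a ``Lusternik--Schnirelmann / cup-length analysis'' at the end, what is actually needed is exactly the paper's argument with the open set $X_1$ and the width-gap estimate coming from sublinearity of $\omega_p^k$ -- but once you run that, the entire varifold detour in your first half becomes superfluous. So the proposal, as written, does not close; to repair it you would essentially have to replace your second half with the paper's cup-length computation, making the first half unnecessary overhead.
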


\begin{proof}
Let $B_r(x_{0})$ be a small closed ball
such that $g_2>g_1$ on $B_r(x_{0})$.
Fix $\eps>0$, such that
for every $k$-cycle $z$ with $g_2$-mass
$\M_{g_2}(z \llcorner  B_r(x_{0})) > \frac{1}{2} \omega_1^k(B_r(x_{0}),g_2)$
we have $\M_{g_2}(z) - \M_{g_1}(z)> \eps$.

By Proposition \ref{upper bound} we have
$\omega_p^k(M, g_1) \leq C p^{\frac{n-k}{n}}$ for some constant $C>0$.
In particular, we can find $p>0$, such that
$\omega_p^k(M,g_1) - \omega_{p-1}^k(M,g_1) < \eps/4$.
Let $F: X \rightarrow Z_k(M; \Z_2)$ be a
$p$-sweepout of $(M,g_{2})$ such that 
$\M_{g_2}(F(x)) \leq \omega^k_p(M,g_2) + \eps/4$ for all $x\in X$.
By \cite[Lemma 2.15]{LMN} we can assume that 
the map $F$ is continuous in the mass norm.

Recall that if two manifolds are bilipschitz diffeomorphic,
then the corresponding spaces of cycles are homeomorphic. In particular, a $p$-sweepout
of one induces a $p$-sweepout of the other. Let $X_1=\{x \in X: \M_{g_2}(F(x)\llcorner B_{r}(x_{0})) > \frac{1}{2} \omega_1^k(B_{r}(x_{0}),g_2)\} $ be an open subset of  $X$.
We claim that the restriction of
$F$ to $X_1$ is a $(p-1)$-sweepout of $M$ (with respect to both $g_{1}$ and $g_{2}$ as $(M,g_{1})$ and $(M,g_{2})$ are bilipschitz diffeomorphic).
Indeed, let
$\lambda\in H^{n-k}(\mathcal{Z}_{k}(M,\mathbb{Z}_{2}))$ be the non-trivial class defined before.
Then $\lambda$ vanishes on
 $X \setminus X_1$ because $F|_{X\setminus X_{1}}\llcorner B_{r}(x_{0})$ is not a sweepout of $B_{r}(x_{0})$ and hence $F|_{X\setminus X_{1}}$ can not be a sweepout of $M$. If $\lambda^{p-1}$
 vanishes on $X_1$, then $\lambda^p$ vanishes
 on $X_1 \cup (X \setminus X_1) =X$,
 which contradicts the definition of
 $p$-sweepout.
 
It follows that $\{ F(x) \}_{x \in X_1}$ is a $(p-1)$-sweepout of $M$ and
\begin{equation*}
    \begin{split}
    \omega_{p-1}^k(M, g_1) & \leq \sup\{ \M_{g_1}(F(x)): x \in X_1\} \\
    & \leq \sup\{\M_{g_2}(F(x)): x \in X_1\} - \eps \\
    & \leq \omega_p^k(M, g_2) - 3/4\eps
    \end{split}
\end{equation*}
If $\omega_p^k(M, g_2) = \omega_p^k(M, g_1)$ then our choice
of $p$ leads to a contradiction.
\end{proof}

The next Lemma follows as in \cite[Lemma 1]{MNS}.

\begin{lemma} \label{lipschitz}
Let $M$ be a closed manifold. Then the k-dimensional p-width $\omega^k_p(g)$ is a locally Lipschitz function
of the metric $g$ in the space $\mathcal{M}^{0}$ of $C^0$ metrics.
\end{lemma}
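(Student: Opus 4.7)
The plan is to reduce Lipschitz continuity of $\omega_p^k$ to a pointwise Lipschitz comparison of the mass functionals $\M_{g_1}$ and $\M_{g_2}$ for two nearby $C^{0}$ metrics, and then push this comparison through the $\inf$--$\sup$ defining the $p$-width.

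First I would fix a reference metric $g_0$ and work in a $C^{0}$-neighborhood $\mathcal{U}\subset\mathcal{M}^{0}$ of $g_0$ on which every metric is uniformly elliptic (say, with eigenvalues in a compact subinterval of $(0,\infty)$). For any two metrics $g_1, g_2\in\mathcal{U}$ and any $k$-plane $V\subset T_xM$, the densities $\sqrt{\det(g_i|_V)}$ differ by a factor $1+O(\|g_2-g_1\|_{C^{0}})$, with constant depending only on $n$, $k$, and $\mathcal{U}$. Integrating over a $k$-rectifiable cycle $z$ then yields the mass comparison
\begin{equation*}
|\M_{g_2}(z) - \M_{g_1}(z)| \leq C\,\|g_2 - g_1\|_{C^{0}}\,\M_{g_1}(z).
\end{equation*}

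Next, given $\eta>0$, I would pick a $p$-sweepout $F:X\to\mathcal{Z}_k(M;\Z_2)$ of $(M,g_1)$ with $\sup_{x\in X}\M_{g_1}(F(x))\leq\omega_p^k(M,g_1)+\eta$. Being a $p$-sweepout is a topological condition (no-concentration of mass holds under a bilipschitz change of metric), so $F$ is simultaneously a $p$-sweepout of $(M,g_2)$. Applying the pointwise mass comparison to each $F(x)$, taking suprema, and letting $\eta\to 0$ gives
\begin{equation*}
\omega_p^k(M,g_2) \leq (1+C\,\|g_2-g_1\|_{C^{0}})\,\omega_p^k(M,g_1),
\end{equation*}
and interchanging $g_1$ and $g_2$ yields the reverse inequality. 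Combining these with the uniform upper bound $\omega_p^k(M,g)\leq C'(p,\mathcal{U})$ on $\mathcal{U}$, which follows from the above inequality together with Proposition \ref{upper bound} applied at $g_0$, produces the desired local Lipschitz estimate.

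The main technical point is the first step: checking that $g\mapsto\sqrt{\det(g|_V)}$ is Lipschitz with respect to the $C^{0}$-norm on metrics, uniformly over $k$-planes $V\subset TM$. This is a purely algebraic estimate exploiting uniform ellipticity on $\mathcal{U}$ (so that the determinant stays away from $0$), and is the $k$-dimensional analogue of the computation in \cite[Lemma 1]{MNS}, which handles $k=n-1$; once the mass comparison is in place, the rest is bookkeeping.
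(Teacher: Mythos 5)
Your proposal takes essentially the same route as the paper: the key step in both is the pointwise mass comparison for $k$-cycles (the paper's version is $\M_{g_1}(z)-\M_{g_2}(z)\leq\big((\sup_v g_1(v,v)/g_2(v,v))^{k/2}-1\big)\M_{g_2}(z)$, which is exactly your $k$-dimensional density estimate in different clothing), followed by plugging into the definition of the width and invoking the upper bound from Proposition \ref{upper bound}. The paper first sets up an explicit metric $d_g(g_1,g_2)=\sup_{v\neq 0}|g_1(v,v)-g_2(v,v)|/g(v,v)$ on $\mathcal{M}^0$ and checks its equivalence across base metrics, whereas you work directly with $\|g_1-g_2\|_{C^0}$ under uniform ellipticity; these are interchangeable. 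You are, if anything, slightly more careful than the paper in spelling out the sweepout step (that a $p$-sweepout for $g_1$ is still a $p$-sweepout for $g_2$, including the no-concentration-of-mass point), which the paper uses implicitly when passing from the mass estimate to the width estimate.
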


\begin{proof}
First we need to give a metric space structure to the set $\mathcal{M}^{0}$. Observe that each $g\in\mathcal{M}^{0}$ induces a metric $d_{g}$ in $\mathcal{M}^{0}$ defined as

\begin{equation*}
    d_{g}(g_{1},g_{2})=\sup_{v\neq 0}\frac{|g_{1}(v,v)-g_{2}(v,v)|}{g(v,v)}
\end{equation*}

It is easy to show that as $M$ is compact, given $g,g'\in\mathcal{M}^{0}$ the induced metrics $d_{g}$ and $d_{g'}$ are equivalent. Therefore we can pick an arbitrary $g_{0}\in\mathcal{M}^{0}$ and fix $d_{g_{0}}$ as our metric.

Now in order to prove the lemma, fix a metric $g\in\mathcal{M}^0$ and suppose $g_1,g_2$ satisfy $ g/C_1 \leq g_i \leq C_1g$ for $i=1,2$ and some $C_1>1$. For some constant $C=C(g)>0$ we have
$\omega_p^k(M, g) \leq C p^{\frac{n-k}{n}}$ by Proposition \ref{upper bound}.

Given a $k$-cycle $z \in \mathcal{Z}_k(M;\Z_2)$ we have
\begin{equation*}
    \begin{split}
    \M_{g_1}(z) - \M_{g_2}(z) & \leq \Big(\big(\sup_{v \neq 0} \frac{g_1(v,v)}{g_2(v,v)}\big)^{\frac{k}{2}}-1\Big)\M_{g_2}(z) \\
    & \leq \Big(\big(1+ \sup_{v \neq 0}\frac{|g_1(v,v) - g_2(v,v)|}{g_2(v,v)}\big)^{\frac{k}{2}}-1\Big)\M_{g_2}(z) \\
    &\leq \Big(\big(1+C_1 d_{g}(g_1,g_2))^{\frac{k}{2}}-1\Big)\M_{g_2}(z)\\
    & \leq C_1k d_{g}(g_1,g_2)\M_{g_2}(z)
    \end{split}
\end{equation*}
for small $d_{g}(g_1,g_2)$.

Then for $g_1$, $g_2$ near $g$ we have
\begin{equation*}
    \begin{split}
    |\omega_{p}^k(M, g_1)- \omega_{p}^k(M, g_2)|
    & \leq C_1kd_{g}(g_1,g_2) \omega_{p}^k(M, g_2) \\
    & \leq C_1^{1+\frac{k}{2}}k C  p^{\frac{n-k}{n}}d_{g}(g_1,g_2)
    \end{split}
\end{equation*}

As $d_{g}$ is equivalent to $d_{g_{0}}$ we get the desired result.
\end{proof}

\section{Proof of the main theorem}
Fix a manifold $M$ and an open subset $U \subset M$.
Let $\mathcal{M}^{k}(U) \subset \mathcal{M}^k$ denote the set of $C^{k}$ metrics $g$
such that there exists an embedded non-degenerate stationary geodesic net in $(M,g)$ intersecting $U$ whose domain is a good weighted multigraph. First we will analyse the case $3\leq k<\infty$.

By Lemma \ref{nondegenerate} we have that $\mathcal{M}^k(U)$ is open.
Now we will show that $\mathcal{M}^k(U)$ is dense. Let $V\subseteq\mathcal{M}^{k}$ be an open subset. We have to show that there exists some $g\in V\cap\mathcal{M}^{k}(U)$.

Let $\{\Gamma_{m}\}_{m\in\mathbb{N}}$ be the countable collection of all good weighted multigraphs. Let $\mathcal{C}_m =  \mathcal{S}^k(\Gamma_m)$.
We have that the projection map $\Pi_{m}: \mathcal{C}_m \rightarrow \mathcal{M}^k$
is a Fredholm map of index $0$ by Theorem \ref{structure}.
Let $Reg_m \subset \mathcal{M}^k$ denote the set of regular values
of $\Pi_m$ and $R = \bigcap_{m\geq 0} Reg_m $.
By Sard-Smale theorem the set $R$ is comeager, so we can find a metric 
$g_0 \in V \cap R$. If $g_{0}\in\mathcal{M}^{k}(U)$ we are done, so let us assume the contrary. Then all embedded stationary geodesic nets of $(M,g_0)$ with domain a good weighted multigraph
are non-degenerate and do not intersect $U$. Let $L_0$ denote the (countable) set of lengths of such geodesic networks. By Lemma \ref{regular}, the set $L_{1}$ of lengths of all stationary geodesic nets for the metric $g_{0}$ is the set of finite sums of elements in $L_{0}$, and hence it is also countable.

Let $\phi: M \rightarrow \R$
be a non-negative smooth bump function supported in $U$ with $\phi(x_{0})>0$
for some $x_{0} \in U$. Define $g_t(x) = (1+t\phi(x))g_0(x)$.
For some sufficiently small $\eps>0$ we have that
$g_t \in V$ for all $t \in [0,\eps]$.
By Lemma \ref{widthincrease} there exists $p>0$, such that
$\omega^1_p(g_\eps)>\omega^1_p(g_0)$.

By Smale's transversality theorem from \cite{Sm}, there exists a sequence of embeddings 
$g_i:[0,\eps] \rightarrow \mathcal{M}^k$ converging to $g$, such that each $g_i$ 
is transverse
to the maps $\Pi_m:\mathcal{C}_m \rightarrow \mathcal{M}^k$ for all $m\geq0$.
Moreover, using \cite[Theorem~3.3]{Sm} we have that $I_{i,m}=  \Pi_m^{-1}(g_i([0,\eps]))$
is a $1$-dimensional submanifold of $\mathcal{C}_{m}$ for each $(i,m)\in\mathbb{N}\times\mathbb{N}_{0}$.
Notice that by transversality,  if $t$ is a regular value of $(g_i)^{-1}\circ \Pi_m|_{I_{i,m}}$, then
$g_i(t) \in Reg_m$ (cf. \cite[Lemma 2]{MNS}).
By the finite-dimensional Sard's lemma applied to $(g_i)^{-1}\circ \Pi_m|_{I_{i,m}}$ we have that $C_i =\bigcap_{m\geq 0}\{t:g_i(t) \in Reg_m \}\subset [0, \eps]$ 
is a subset of full measure.


Note that $\omega_p^1(g_{i}([0,\eps])) \rightarrow \omega^1_p(g([0,\varepsilon]))$ as $i \rightarrow \infty$ 
and without any loss of generality we may assume that there is an interval
 $[a,b] \subset \omega_p^1(g_{i}([0,\eps])) $ for all $i$.
By Lemma \ref{lipschitz} we have that $C= \bigcap_{i=1}^\infty \omega_p^1(g_{i} (C_i)) \cap [a,b] \setminus L_{1}$ is non-empty (because $L_{1}$ is countable and $\omega^{1}_{p}(g_{i}(C_{i}))\cap[a,b]$ is a full measure subset of $[a,b]$ for every $i\in\mathbb{N}$). Let $l \in C$.
By Proposition \ref{width},   
for each $i$ we have that
$l = \sum_{j=1}^{P_i} L(\gamma_j^i)$, where each $\gamma^i_j$ is an
(non-degenerate) embedded stationary geodesic net in $(M,g_i(t_i))$ whose domain is a good weighted multigraph,
for some $t_i \in (\omega_p^1 \circ g_i)^{-1}(l)$.
Passing to a subsequence if necessary, we can assume that there exists $t'=\lim t_i \in [0,\varepsilon]$ and that the sequence $\gamma^i = \bigcup_j \gamma_j^i$
converges to a stationary geodesic net $\gamma$ in $(M,g_{t'})$. However, since $L(\gamma)=l \notin L_1$, $\gamma$ is not a stationary geodesic net for $g_0$ and hence it must intersect $U$. As $\lim\gamma^{i}=\gamma$, there exists $i_{1}\in\mathbb{N}$ such that $\gamma^{i}$ intersects $U$ for all $i\geq i_{1}$. On the other hand, as $\lim g_{i}(t_{i})=g_{t'}\in V$, there exists $i_{2}\in\mathbb{N}$ such that $i\geq i_{2}$ implies $g_{i}(t_{i})\in V$. Thus if $i\geq\max\{i_{1},i_{2}\}$, the metric $g_{i}(t_{i})$ is in $V$ and one component $\gamma^{i}_{j}$ of $\gamma^{i}$ is an embedded stationary geodesic net intersecting $U$ whose domain is a good weighted multigraph. As $g_{i}(t_{i})$ is bumpy, we deduce that $g_{i}(t_{i})\in\mathcal{M}^{k}(U)$ and hence $V\cap\mathcal{M}^{k}(U)\neq\emptyset$.


So far we have proved that for $3\leq k<\infty$, $\mathcal{M}^{k}(U)\subseteq\mathcal{M}^{k}$ is open and dense for every open subset $U\subseteq M$. Taking a countable basis $\{U_{m}\}_{m\in\mathbb{N}}$ for the topology of $M$ and setting $\mathcal{N}^{k}=\bigcap_{m\in\mathbb{N}}\mathcal{M}^{k}(U_{m})$ we see that $\mathcal{N}^{k}\subseteq\mathcal{M}^{k}$ is generic and $g\in\mathcal{N}^{k}$ if and only if the union of the images of all nondegenerate embedded stationary geodesic networks with respect to $g$ whose domain is a good weighted multigraph is dense in $M$. This proves Theorem \ref{density} in the case $3\leq k<\infty$. For the case $k=\infty$, we can define $\mathcal{N}^{\infty}$ to be the set of $C^{\infty}$ metrics for which the union of the images of all nondegenerate embedded stationary geodesic nets whose domain is a good weighted multigraph is dense in $M$. Thus it is clear that $\mathcal{N}^{\infty}=\bigcap_{k\geq 3}\mathcal{N}^{k}$ and that if $k'\geq k$ then $\mathcal{N}^{k'}=\mathcal{N}^{k}\cap\mathcal{M}^{k'}$; so by \cite[Lemma~6.2]{St} we deduce that $\mathcal{N}^{\infty}$ is a generic subset of $\mathcal{M}^{\infty}$ (see also a similar 
argument in \cite{Wh2} and \cite[Corollary 5.14]{CM}).

\section{Open problems}
By analogy with the case of minimal hypersurfaces \cite{MNS},
we conjecture that an equidistribution result should hold
for stationary geodesic nets. 

\begin{conjecture} \label{equidistribution}
For a generic set of metrics, there exists a set of stationary geodesic 
nets that is equidistributed in $M$. Specifically, for every $g$ in the generic set, 
there exists
a sequence $\{ \gamma_i: \Gamma_i \rightarrow M \}$ of stationary geodesic nets in $(M,g)$,
such that for every $C^{\infty}$ function $f: M \rightarrow \R$ we have
$$\lim_{k \rightarrow \infty} \frac{\sum_{i=1}^k \int_{\gamma_i} f \dl_g}{\sum_{i=1}^k \length_{g}(\gamma_i)}= \frac{\int_M f \dv_g}{\Vol(M,g)}$$
\end{conjecture}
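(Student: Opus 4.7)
The plan is to follow the strategy of Marques--Neves--Song \cite{MNS} for equidistribution of minimal hypersurfaces, adapted to the 1-dimensional setting. The crucial missing input is a Weyl law for 1-dimensional widths of the form
\begin{equation*}
\lim_{p \to \infty} \omega_p^1(M,g)\, p^{-(n-1)/n} = a(n)\, \Vol(M,g)^{1/n},
\end{equation*}
for a dimensional constant $a(n)>0$. Proposition \ref{upper bound} already supplies the correct order of growth, so the content of the Weyl law is the existence of the limit together with the identification of its leading coefficient as a multiple of $\Vol(M,g)^{1/n}$.

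Granting such a Weyl law, the argument would proceed as follows. Fix a countable $C^0$-dense set $\{f_j\}\subset C^\infty(M)$ and, for each $j$, consider the conformal perturbation $g_{j,s}=e^{2sf_j}g$. Since the length of a 1-cycle $z$ in $g_{j,s}$ equals $\int_z e^{sf_j}\,\dl_g$, one has $\tfrac{d}{ds}\big|_{s=0}\M_{g_{j,s}}(z)=\int_z f_j\,\dl_g$. Combining Lemma \ref{lipschitz}, Theorem \ref{structure} and Proposition \ref{width}, for a generic metric $g$ the function $s\mapsto\omega_p^1(M,g_{j,s})$ is differentiable at $s=0$ with derivative $\sum_i\int_{\gamma_i^{p,j}}f_j\,\dl_g$, where $\{\gamma_i^{p,j}\}$ is the finite collection of embedded nondegenerate stationary geodesic nets realizing the $p$-width of $(M,g)$. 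Differentiating the conjectured Weyl law at $s=0$ gives
\begin{equation*}
\frac{d}{ds}\bigg|_{s=0}\omega_p^1(M,g_{j,s}) = a(n)\,\Vol(M,g)^{(1-n)/n}\,p^{(n-1)/n}\int_M f_j\,\dv_g + o\bigl(p^{(n-1)/n}\bigr).
\end{equation*}
Matching the two expressions, dividing by $\sum_i\length_g(\gamma_i^{p,j})=\omega_p^1(M,g)\sim a(n)\Vol(M,g)^{1/n}p^{(n-1)/n}$ and letting $p\to\infty$ yields equidistribution against each $f_j$. Concatenating the nets $\gamma_i^{p,j}$ in increasing order of $p$ and applying a diagonal argument over the $\{f_j\}$ and a countable basis of open sets would produce a single sequence of stationary geodesic nets equidistributing against every $f\in C^\infty(M)$.

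The hard part is establishing the Weyl law itself. For 1-cycles it is known only for $k$-cycles in Euclidean domains and for $(n-1)$-cycles in arbitrary manifolds \cite{LMN}, and for 1-cycles in 3-manifolds \cite{GuL}; in dimension $n\geq 4$ it remains a central open problem, and Song's trick used in Lemma \ref{widthincrease} is not quantitative enough to bypass it. A secondary technical obstacle is to upgrade the Lipschitz estimate of Lemma \ref{lipschitz} to genuine differentiability at the relevant parameters and to identify the derivative with the sum of length integrals. This would require a refinement of the transversality argument used in the proof of Theorem \ref{density}, ensuring that for a generic metric each $p$-width is realized by a unique finite collection of nondegenerate stationary geodesic nets with no bifurcation along the perturbation at $s=0$; the structure theorem (Theorem \ref{structure}) combined with a Sard--Smale argument applied to the one-parameter family $g_{j,s}$ should suffice for this, analogously to the key step in \cite{MNS}.
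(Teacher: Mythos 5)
This statement is labeled a \emph{Conjecture} in the paper, and the paper offers no proof of it; indeed, the authors explicitly remark that the Weyl law ``does, however, seem \ldots necessary to prove a stronger equidistribution result in \cite{MNS},'' and they note that only the cases $n=2,3$ have been resolved (in \cite{LiSta}), precisely because the 1-dimensional Weyl law is available there (via \cite{LMN} for $n=2$ and \cite{GuL} for $n=3$). So there is no paper proof for you to match.

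Your proposal is, correctly, not a proof but a conditional sketch, and you are forthright about this. You have accurately identified the bottleneck: a Weyl law $\omega_p^1(M,g)\,p^{-(n-1)/n}\to a(n)\Vol(M,g)^{1/n}$ for $1$-cycles in dimension $n\geq 4$, which remains open and which Song's monotonicity trick (the one underlying Lemma~\ref{widthincrease}) is not quantitative enough to replace. Granting the Weyl law, your outline reproduces the Marques--Neves--Song scheme: conformal perturbations $e^{2sf_j}g$, a one-sided differentiability argument for $s\mapsto\omega_p^1(M,g_{j,s})$ via Theorem~\ref{structure} and a Sard--Smale transversality step, matching the derivative against the Weyl asymptotic, and a diagonal argument over a countable dense family $\{f_j\}$. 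This is exactly the strategy the paper (and \cite{LiSta}) has in mind, and your secondary caveat --- upgrading Lemma~\ref{lipschitz}'s Lipschitz continuity to differentiability with the correct first-variation formula, ruling out bifurcations along the path at $s=0$ --- is also a genuine, known technical point in \cite{MNS}. In short: the argument is the right one and the self-assessment is accurate, but it does not constitute a proof, since the Weyl law input is unavailable in general; it would be misleading to present it as more than a reduction of the conjecture to the Weyl law plus the MNS differentiability lemma.
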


The cases $n=2$ and $n=3$ of this conjecture were solved in \cite{LiSta}. In fact, in dimension $n=2$
it is proved that closed geodesics are equidistributed in $M$ for generic metrics.

By analogy with Yau's conjecture for minimal surfaces recently resolved by Song \cite{So1} we also conjecture that there exist infinitely many distinct stationary geodesic nets in every Riemannian manifold $(M,g)$.

\bibliographystyle{amsbook}

\end{document}